\newtheorem{theorem}{Theorem}[section]
\newtheorem{proposition}[theorem]{Proposition}
\newtheorem{lemma}[theorem]{Lemma}
\theoremstyle{definition}
\newtheorem{example}[theorem]{Example}
\newcommand{\id}{\text{id}}
\newcommand{\s}{\text{S}}
\newcommand{\dist}{\text{d}}
\numberwithin{equation}{section}
\begin{document}

\title{A coarse invariant}

\author{A. Fox}
\address{Saint Francis University, Loretto, PA 15940}
\email{adfst5@francis.edu}

\author{B. LaBuz}
\address{Saint Francis University, Loretto, PA 15940}
\email{blabuz@@francis.edu}

\author{R. Laskowsky}
\address{Saint Francis University, Loretto, PA 15940}
\email{ralst8@@francis.edu}

\subjclass[2000]{}
\date{}

\begin{abstract}
This note extends the invariant of metric spaces under bornologous equivalences defined in \cite{MMS} to the coarse category.
\end{abstract}

\maketitle
\tableofcontents

\section{Introduction}

A coarse function $f:X\to Y$ between metric spaces is a function that is bornologous and proper. $f$ is bornologous if for each $N>0$ there is an $M>0$ such that if $d(x,y)\leq N$, $d(f(x),f(y))\leq M$. In this setting we call $f$ proper if inverse images of bounded sets are bounded.

Notice bornology is dual to continuity. Thus bornology is a fundamental concept of coarse (or large scale) geometry just as continuity is a fundamental concept of topology (small scale geometry). We are studying the large scale behavior of functions and large scale properties of spaces.

Two metric spaces $X$ and $Y$ are coarsely equivalent if there are coarse functions $f:X\to Y$ and $g:Y\to X$ such that $g\circ f$ is close to $\id_X$ and $f\circ g$ is close to $\id_Y$. Two functions $f_1$ and $f_2$ are close if $d(f_1(x),f_2(x))$ is uniformly bounded. A standard reference for the preceding concepts and coarse geometry in general is \cite{Roe}.

In \cite{MMS} an invariant in the bornologous category is constructed. This note extends the construction in \cite{MMS} to the coarse category. Bornologous equivalence is more strict than coarse equivalence. For bornologous equivalence $f\circ g$ and $g\circ f$ are required to be the identity on the nose. Coarse equivalence can be viewed as being in the category where, instead of considering functions, one considers equivalence classes of functions. Two functions are equivalent if they are close.

The standard example of two coarsely equivalent spaces is $\mathbb R$ and $\mathbb Z$ (see Example \ref{integers}). Of course these spaces cannot be bornologously equivalent because they do not have the same cardinality. We can explain interest in the coarse category as opposed to the bornologous category as follows. Since we are interested in large scale behavior, we should ignore all small scale behavior including cardinality. We should not care whether the number of points in a neighborhood is finite or infinite.

\section{Previous construction}

We recall the construction from \cite{MMS}. Fix a basepoint $x_0\in X$. Given $N>0$, an $N$-sequence in $X$ based at $x_0$ is an infinite list $x_0,x_1,\ldots$ of points in $X$ with $d(x_i,x_{i+1})\leq N$ for each $i\geq 0$. Since we are interested in the large scale structure of $X$, we are only interested in sequences that go to infinity. An $N$-sequence $x_0,x_1,\ldots$ goes to infinity if $d(x_0,x_i)\to\infty$. Let $\s_N(X,x_0)$ be the set of all $N$-sequences in $X$ based at $x_0$ that go to infinity.

We call two sequences $s,t\in \s_N(X,x_0)$ equivalent if there is a finite list $s_0,
\ldots,s_n\in \s_N(X,x_0)$ with $s_0=s$, $s_n=t$, and for each $i\geq 0$, $s_{i+1}$ is either a subsequence of $s_i$ or $s_i$ is a subsequence of $s_{i+1}$. If $s_i$ is a subsequence of $s_{i+1}$ we say $s_{i+1}$ is a supersequence of $s_i$. Let $[s]_N$ denote the equivalence class of $s$ in $\s_N(X,x_0)$ and let $\sigma_N(X,x_0)$ be the set of equivalence classes.

The cardinality of the set $\sigma_N(X,x_0)$ is the desired invariant. It essentially determines the number of different ways of going to infinity in $X$. Since this cardinality depends on $N$, we have the following definition. For each integer $N>0$ there is a function $\phi_N:\sigma_N(X,x_0)\to\sigma_{N+1}(X,x_0)$ that sends the equivalence class $[s]_N$ to the equivalence class $[s]_{N+1}$. $X$ is said to be $\sigma$-stable if there is a $K>0$ for which $\sigma_N$ is a bijection for each integer $N\geq K$. If $X$ is $\sigma$-stable let $\sigma(X,x_0)$ denote the cardinality of $\sigma_K(X,x_0)$.

It would be better to call $X$ ``$\sigma$-stable with respect to $x_0$'' since apparently this definition depends on basepoint. In fact it does not; this issue is addressed in the next section.

The following is the main theorem of \cite{MMS}. It is the theorem that we wish to extend to coarse equivalences.

\begin{theorem}[\cite{MMS}, Theorem 3.2]
Suppose $f:X\to Y$ is a bornologous equivalence between metric spaces. Let $x_0$ be a basepoint of $X$ and set $y_0=f(x_0)$. Suppose $X$ and $Y$ are $\sigma$-stable. Then $\sigma(X,x_0)=\sigma(Y,y_0)$.
\end{theorem}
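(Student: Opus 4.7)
The plan is to push sequences forward by $f$ and by its bornologous inverse $g$ termwise, verify that these descend to well-defined maps on equivalence classes, and then use $\sigma$-stability together with the strict identities $g\circ f = \id_X$ and $f\circ g = \id_Y$ to identify the compositions with iterated stabilization maps and so conclude that $f$ induces a bijection at the stable level.

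First I would verify that if $M = M(N)$ is the bornology constant for $f$ at scale $N$, then termwise application sends $\s_N(X,x_0)$ into $\s_M(Y,y_0)$. Consecutive distances are handled directly by bornology. The go-to-infinity condition passes over because $g$ is also bornologous: if $f(x_i)$ remained in a bounded set, then $x_i = g(f(x_i))$ would also remain bounded, contradicting $d(x_0,x_i)\to\infty$. Since termwise application preserves the subsequence relation, it descends to a well-defined $f_*:\sigma_N(X,x_0)\to\sigma_M(Y,y_0)$ with $[s]_N\mapsto[f(s)]_M$, and likewise one defines $g_*$ with its own bornology constant $M'$.

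Let $K$ be a common stabilization threshold for $X$ and $Y$. Iterating the construction above, I would build a four-term zigzag
\[
\sigma_{N_0}(X,x_0)\xrightarrow{f_*}\sigma_{N_1}(Y,y_0)\xrightarrow{g_*}\sigma_{N_2}(X,x_0)\xrightarrow{f_*}\sigma_{N_3}(Y,y_0)\xrightarrow{g_*}\sigma_{N_4}(X,x_0),
\]
with $K\leq N_0\leq N_1\leq N_2\leq N_3\leq N_4$ each being the appropriate bornology constant for the preceding arrow. Because $g\circ f$ and $f\circ g$ are the identity on the nose, each length-two composition in this zigzag sends $[s]\mapsto[s]$ and so coincides with the iterated map $\phi_{N_{k+2}-1}\circ\cdots\circ\phi_{N_k}$, which is bijective by $\sigma$-stability. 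A short diagram chase (using that any map $\alpha$ with a bijective $\beta\alpha$ is injective, and combining injectivity on one side with surjectivity on the other) then forces every individual arrow to be a bijection; composing with the stabilization isomorphisms yields a bijection $\sigma_K(X,x_0)\to\sigma_K(Y,y_0)$, proving $\sigma(X,x_0)=\sigma(Y,y_0)$.

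The main obstacle is the bookkeeping of constants: we must choose $N_0\geq K$ large enough that all successive bornology constants $N_1=M(N_0),\ N_2=M'(N_1),\ N_3=M(N_2),\ N_4=M'(N_3)$ stay in the stable range on both sides. This is routine once one notes that bornology constants may always be enlarged, but it is what makes the zigzag formulation cleaner than a single forward-backward composition.
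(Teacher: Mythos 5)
Your argument is correct. Note, though, that the paper does not actually prove this statement: it is quoted verbatim from \cite{MMS} as the result to be generalized, so the only proof to compare against is the paper's proof of its coarse analogue in Section 4. Your zigzag is essentially that proof's skeleton specialized to the strict setting: you define the induced maps $f_*$ and $g_*$ on classes of sequences (the paper's $f_K$ and $g_M$), identify the two-step composites with the stabilization maps $\phi$, and extract injectivity and surjectivity from the resulting commuting triangles. What disappears in your setting, and is the real content of the paper's version, is everything forced by $g\circ f$ being merely \emph{close} to $\id_X$: the paper must pass to the shifted basepoint $g\circ f(x_0)$ via the change-of-basepoint bijection $z_L$ of Lemma \ref{zlemma}, and it must prove that $[g(f(s))]$ and $[s]$ agree by interleaving the two sequences into a single $L$-sequence having both as subsequences. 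In your case $g\circ f(s)=s$ on the nose, so both steps are vacuous; your observation that the go-to-infinity condition is preserved because $g$ pulls bounded sets back to bounded sets is the right replacement for the properness hypothesis of the coarse setting. The bookkeeping of constants you flag at the end is handled the same way in the paper (each bornology constant is simply enlarged past the stabilization thresholds), and your extra fourth arrow $a_4=g_*$ is harmless but not needed: three arrows already give that the middle map is bijective and hence that the first is.
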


\section{Change of basepoint in $\sigma$-stable spaces}

As mentioned above, the definition of $\sigma$-stable depends on the choice of basepoint. We show that in fact a space being $\sigma$-stable is independent of basepoint.

\begin{lemma}\label{zlemma}
Suppose $x_0,y_0\in X$ and $n\geq \dist(x_0,y_0)$. Let $z_n:\sigma_n(X,x_0)\to\sigma_n(X,x_1)$ be the function that sends the equivalence class of a sequence $x_0,x_1,x_2,\ldots$ to the equivalence class of $y_0,x_0,x_1,x_2,\ldots$. Then $z_n$ is a bijection.
\end{lemma}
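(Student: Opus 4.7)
The plan is to build an explicit two-sided inverse. Let $w_n : \sigma_n(X, y_0) \to \sigma_n(X, x_0)$ be the symmetric map that prepends $x_0$ to a sequence based at $y_0$; I claim $z_n$ and $w_n$ are mutually inverse bijections.

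First I would verify well-definedness. Given an $n$-sequence $x_0, p_1, p_2, \ldots$ going to infinity, the hypothesis $n \geq \dist(x_0, y_0)$ guarantees that $y_0, x_0, p_1, p_2, \ldots$ is again an $n$-sequence, now based at $y_0$, and the triangle inequality
\[
\dist(y_0, p_i) \geq \dist(x_0, p_i) - \dist(x_0, y_0)
\]
shows that it still goes to infinity. Prepending $y_0$ to both sides of a subsequence relation in $\s_n(X, x_0)$ produces a subsequence relation in $\s_n(X, y_0)$, so the finite chains defining equivalence transport term by term, and $z_n$ descends to the set of equivalence classes. The same argument handles $w_n$.

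To verify $w_n \circ z_n = \id$, observe that for $s = x_0, p_1, p_2, \ldots$ we have
\[
w_n(z_n([s]_n)) = [\, x_0, y_0, x_0, p_1, p_2, \ldots\,]_n,
\]
and $s$ itself is a subsequence of $x_0, y_0, x_0, p_1, p_2, \ldots$ (delete the second and third entries), so the two classes agree. The composition $z_n \circ w_n = \id$ is handled identically, producing $y_0, x_0, y_0, u_1, u_2, \ldots$ from which the original $y_0, u_1, u_2, \ldots$ is recovered as a subsequence.

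The only subtlety worth flagging is the basepoint change in the going-to-infinity condition, which the triangle inequality above dispenses with uniformly; everything else is bookkeeping about sub- and supersequences, and that bookkeeping is automatic because the prepending operation is applied uniformly to every term of an equivalence chain.
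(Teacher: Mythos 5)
Your proposal is correct and follows essentially the same route as the paper: construct the prepending map $w_n$ in the opposite direction and observe that each composite yields a sequence of which the original is a subsequence, hence the same equivalence class. The only difference is that you explicitly verify well-definedness (the $n$-sequence condition, preservation of going to infinity, and compatibility with the equivalence relation), which the paper leaves implicit.
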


\begin{proof}
Let $w_n:\sigma_n(X,y_0)\to \sigma_n(X,x_0)$ be the function that sends the equivalence class of a sequence $y_0,y_1,y_2,\ldots$ to the equivalence class of $x_0,y_0,y_1,y_2,\ldots$. We show that $z_n$ and $w_n$ compose to form the identities and thus $z_n$ must be a bijection. Suppose $[(x_i)]\in\sigma_n(X,x_0)$. Then $(w_n\circ z_n)([(x_i)])$ is the equivalence class of the sequence $x_0,y_0,x_0,x_1,\ldots$ which is a supersequence of $(x_0)$. Similarly, $z_n\circ w_n$ is the identity on $\sigma_n(X,y_0)$.
\end{proof}

\begin{proposition}
Suppose a metric space $X$ is $\sigma$-stable with respect to a basepoint $x_0\in X$. Let $y_0\in X$. Then $X$ is $\sigma$-stable with respect to $y_0$ and $\sigma(X,x_0)=\sigma(X,y_0)$.
\end{proposition}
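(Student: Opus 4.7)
The plan is to transfer the $\sigma$-stability structure from $x_0$ to $y_0$ by showing that Lemma \ref{zlemma} provides a natural isomorphism between the directed systems $\{\sigma_n(X,x_0), \phi_n\}$ and $\{\sigma_n(X,y_0), \phi_n'\}$, where $\phi_n'$ denotes the analogue of $\phi_n$ based at $y_0$.

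The key step is a naturality check. Fix an integer $N_0 \geq \dist(x_0, y_0)$; for every integer $n \geq N_0$, Lemma \ref{zlemma} supplies bijections $z_n$ and $z_{n+1}$. I claim that the square formed by these $z$'s together with $\phi_n$ and $\phi_n'$ commutes. Unwinding the definitions, both $z_{n+1}(\phi_n([x_0,x_1,\ldots]_n))$ and $\phi_n'(z_n([x_0,x_1,\ldots]_n))$ simplify to $[y_0,x_0,x_1,\ldots]_{n+1}$, so commutativity is immediate.

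With naturality in hand, suppose $X$ is $\sigma$-stable with respect to $x_0$ with threshold $K$, and set $K' = \max(K, N_0)$. For every integer $n \geq K'$, the vertical maps $z_n, z_{n+1}$ are bijections by Lemma \ref{zlemma} and $\phi_n$ is a bijection by hypothesis, so $\phi_n' = z_{n+1} \circ \phi_n \circ z_n^{-1}$ is also a bijection. Therefore $X$ is $\sigma$-stable with respect to $y_0$ with threshold $K'$, and
\[
\sigma(X,y_0) = |\sigma_{K'}(X,y_0)| = |\sigma_{K'}(X,x_0)| = \sigma(X,x_0)
\]
via the bijection $z_{K'}$.

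There is essentially no obstacle in this argument; the content is packaged entirely inside Lemma \ref{zlemma}, and the remainder is a routine verification that the $z_n$'s intertwine the $\phi_n$'s. The only mild bookkeeping point is to choose the stability threshold $K'$ large enough to simultaneously satisfy the $\sigma$-stability constant coming from $x_0$ and the distance condition $n \geq \dist(x_0,y_0)$ required by Lemma \ref{zlemma}.
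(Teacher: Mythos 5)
Your proof is correct and follows essentially the same route as the paper: both use the bijections $z_n$ from Lemma \ref{zlemma} to form a commuting square with $\phi_n$ and its analogue at $y_0$, then conclude that the latter is a bijection for all sufficiently large $n$. You additionally spell out the commutativity check (both composites yield $[y_0,x_0,x_1,\ldots]_{n+1}$), which the paper merely asserts.
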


\begin{proof}
Let $N\in\mathbb N$ be such that $\phi_n:\sigma_n(X,x_0)\to\sigma_{n+1}(X,x_0)$ is a bijection for all $n\geq N$. Choose $M\in\mathbb N$ such that $M\geq N,\dist(x_0,x_1)$. Suppose $n\geq M$.  Then the following diagram commutes.

\begin{diagram}
\sigma_{n+1}(X,x_0) & \rTo^{z_{n+1}} & \sigma_{n+1}(X,y_0)\\
\uTo^{\phi_n}       &                & \uTo_{\psi_n}\\
\sigma_n(X,x_0)     & \rTo^{z_n}     & \sigma_n(X,y_0)\\
\end{diagram}

Since $\phi_n$, $z_n$, and $z_{n+1}$ are bijections, so is $\psi_n$.
\end{proof}

\endproof

\section{The invariant}

\begin{theorem}
Suppose $X$ and $Y$ are coarsely equivalent and $\sigma$-stable. Then $\sigma(X)=\sigma(Y)$
\end{theorem}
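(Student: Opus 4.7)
My plan is to mimic the proof of Theorem 2.1 from \cite{MMS} but to absorb the failure of $g\circ f$ and $f\circ g$ to equal the identity into the change-of-basepoint machinery of Lemma \ref{zlemma}. Fix coarse maps $f\colon X\to Y$ and $g\colon Y\to X$ witnessing the coarse equivalence, set $x_0\in X$ and $y_0=f(x_0)\in Y$, and choose $C\geq 0$ with $d(g(f(x)),x)\leq C$ and $d(f(g(y)),y)\leq C$ for all $x\in X$ and $y\in Y$; in particular $d(x_0,g(y_0))\leq C$.

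First I would verify that $f$ induces a well-defined map $f_*\colon\sigma_N(X,x_0)\to\sigma_M(Y,y_0)$ whenever $M$ is a bornologous bound for $f$ on $N$-distances. Pointwise pushforward of an $N$-sequence is an $M$-sequence based at $y_0$; it still goes to infinity because $f$ is proper (if $\{f(x_i)\}$ stayed in a bounded set $B$, then $\{x_i\}\subseteq f^{-1}(B)$ would be bounded, contradicting $d(x_0,x_i)\to\infty$); and pushforward respects subsequence inclusion, hence equivalence classes. Analogously, $g$ induces $g_*\colon\sigma_M(Y,y_0)\to\sigma_{M'}(X,g(y_0))$ for an appropriate $M'$, and Lemma \ref{zlemma} provides a change-of-basepoint bijection $z\colon\sigma_{M'}(X,g(y_0))\to\sigma_{M'}(X,x_0)$ as soon as $M'\geq C$. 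These maps commute with the stabilization maps $\phi_n$ up to reindexing, so for $N$ sufficiently large they assemble into maps $F\colon\sigma(X,x_0)\to\sigma(Y,y_0)$ and $G:=z\circ g_*\colon\sigma(Y,y_0)\to\sigma(X,x_0)$.

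The heart of the argument is showing $G\circ F=\id$ and $F\circ G=\id$. Unwinding the definitions at a common level $L$ that exceeds the stability threshold of $X$ and also $\max(C+N, M_g(M_f(N)))$, the composition $G\circ F$ sends $[x_0,x_1,x_2,\ldots]_L$ to $[x_0,g(f(x_0)),g(f(x_1)),g(f(x_2)),\ldots]_L$. To identify these classes I would interleave them to form
\[
x_0,\,g(f(x_0)),\,x_1,\,g(f(x_1)),\,x_2,\,g(f(x_2)),\,\ldots,
\]
whose consecutive distances are bounded by $C+N\leq L$. This is an $L$-sequence based at $x_0$ that goes to infinity, and both $x_0,x_1,x_2,\ldots$ and $x_0,g(f(x_0)),g(f(x_1)),\ldots$ are subsequences of it, so all three represent the same class in $\sigma_L(X,x_0)$. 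The symmetric argument on $Y$, using $d(f(g(y)),y)\leq C$ and the analogous interleaving, gives $F\circ G=\id$, so $F$ is a bijection and $\sigma(X)=\sigma(Y)$.

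The principal obstacle is organizational rather than conceptual: one must coordinate the bornologous growth functions $M_f$ and $M_g$, the closeness constant $C$, and the stability thresholds for both spaces so that all relevant sequences live simultaneously in a single $\s_L$. Stability is exactly what lets us pass to a common large $L$ without losing information, while the interleaving construction is the device that converts coarse-closeness of $g\circ f$ to $\id_X$ into an actual equality of equivalence classes in $\sigma_L$.
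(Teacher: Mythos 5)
Your proposal is correct and follows essentially the same route as the paper: push sequences forward along $f$ and $g$, use the change-of-basepoint bijection of Lemma \ref{zlemma} to account for $g\circ f(x_0)\neq x_0$, and identify $g_*f_*$ of a sequence with the original class by interleaving the sequence with its image under $g\circ f$ (and symmetrically for $f_*g_*$), with $\sigma$-stability supplying the common level. The only difference is organizational — you phrase the conclusion as $G\circ F=\id$ and $F\circ G=\id$ on the stabilized sets, where the paper keeps the two commuting diagrams at explicit levels $K,M,L,S$ — and you add a worthwhile explicit check (via properness) that pushforward sequences still go to infinity.
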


\proof
Suppose $f:X\to Y$ and $g:Y\to X$ compose a coarse equivalence. Let $x_0$ be a basepoint in $X$ and $y_0 = f(x_0)$ be a basepoint in $Y$.
Because $g\circ f$ is close to $\id_X$, we can say that there is a $D$ such that $\dist(x,g\circ f(x))\leq D$ for all $x \in X$.  Let $K$ be the integer provided by $X$ being $\sigma$-stable and $K'$ be the integer provided by $Y$ being $\sigma$-stable.  As $f$ is bornologous, there is an $M$ such that $f$ sends $K$-sequences to $M$-sequences in $Y$.  We can assume $ M\geq K'$. Similarly, because $g$ is bornologous, there is an $L$ such that $g$ sends $M$-sequences to $L$-sequences in $X$, choosing $L \geq D$.

Let $z_L:\sigma_L(X, x_0)\to \sigma_L(X, g\circ f(x_0))$ be the function that sends the equivalence class of $x_0, x_1, \ldots$ to the equivalence class of $g\circ f(x_0),x_0, x_1, \ldots$.  We chose $L\geq D$, so we can say that this addition does not prevent $x_0, g\circ f(x_0), x_1, \ldots$ from being an $L$ sequence. By \ref{zlemma} we know $z_L$ is a bijection.

Let $f_K$ be the function that sends an element$[s]_K\in\sigma_K(X,x_0)$ to the element $[f(s)]_M\in\sigma_M(Y,f(x_0))$ and let $g_M$ be the function that sends an element $[s]_M\in\sigma_M(Y,f(x_0))$ to the element $[g(s)]_L\in\sigma(X,g\circ f(x_0))$.

We show the following diagram commutes:

\begin{diagram}
\sigma_L(X, g\circ f(x_0)) &   &                  &   &                     \\
\uTo^{z_L}                 &\luTo(4,4)^{g_M}  &                  &   &                     \\
\sigma_L(X, x_0)           &   &                  &   &                     \\
\uTo^{\phi_{KL}}           &   &                  &   &                     \\
\sigma_K(X, x_0)           &   &    \rTo_{f_K}    &   & \sigma_M(Y, f(x_0)) \\
\end{diagram}

Let $(x_n)$ be a $K$-sequence in $X$. Then $g\circ f([(x_n)])$ is the equivalence class of the sequence $g\circ f(x_0), g\circ f(x_1), \ldots$ and $z_L\circ\phi_{KL}([x_n])$ is the equivalence class of the sequence $g\circ f(x_0),x_0,x_1,\ldots$. Consider the sequence \[g\circ f(x_0), x_0, x_1, g\circ f(x_1),g\circ f(x_2), x_2, x_3, g\circ f(x_3),g\circ f(x_4), \ldots\] There are three distances to consider: the distance between successive elements of ${x_n}$, the distance between successive elements of ${g\circ f(x_n)}$, and the distance between any $x_i$, and its counterpart $g\circ f(x_i)$.
Because $d(x_i, g\circ f(x_i)) \leq D$, $d(x_i, x_{i+1}) \leq K$, and $d(g\circ f(x_i), g\circ f(x_{i+1})) \leq L$, the unioned sequence is an $L$-sequence.  Further, because the two sequences ${x_n}$ and ${g\circ f(x_n)}$ are visited in order, we can say that ${x_n}$ and ${g\circ f(x_n)}$ are both subsequences of this union.  Thus, the diagram commutes.

Since $z_L\circ\phi_{KL}$ is a bijection, $f_K$ must be one-to-one.

Symmetrically we can see that the following diagram commutes where $S$ is chosen so that $\dist(y,f\circ g(y))\leq S$ for all $y\in Y$ and $\dist(f(x),f(y))\leq S$ whenever $\dist(x,y)\leq L$.

\begin{diagram}
                           &   &                  &   &  \sigma_S(Y, f\circ g\circ f(x_0))  \\
                           &   &                  &  \ruTo(4,4)^{f_L}   &  \uTo^{z_M}                         \\
                           &   &     &   &  \sigma_S(Y, f(x_0))                \\
                           &   &                  &   &  \uTo^{\psi_{MS}}                   \\
\sigma_L(X, g\circ f(x_0)  &   &    \lTo_{g_M}    &   &  \sigma_M(Y, f(x_0))                \\
\end{diagram}

Thus $g_M$ must be one-to-one which forces $f_K$ to be onto. Then we have that $f_K$ is a bijection.

\endproof

\section{Some examples}

We begin with the standard example of a coarse equivalence. 

\begin{example}\label{integers}\cite{Roe}
Consider $\mathbb R$ and $\mathbb Z$ as metric spaces under the usual metric. Let $f:\mathbb R\to \mathbb Z$ be the floor function, $x\mapsto \lfloor x\rfloor$. Let $g:\mathbb Z\to \mathbb R$ be the inclusion, $n\mapsto n$. It is easy to see that $f$ and $g$ are coarse and that $g\circ f$ and $f\circ g$ are close to the identities ($g\circ f$ is the identity). Corollary 3.7 in \cite{MMS} says that $\sigma(\mathbb R)=2$. Since $\mathbb Z$ is coarsely equivalent to $\mathbb R$ we must have $\sigma(\mathbb Z)=2$ also. Of course we can see these two sequences in $\mathbb Z$. 

\end{example}

Next we give another way to calculate $\sigma(V)$ where $V$ is the vase from \cite[Example 1.3]{MMS}. We first give a basic lemma.

\begin{lemma}
Suppose $f:X\to Y$ is any function and $g:Y\to X$ is bornologous. Suppose that $g\circ f$ is close to the identity on $X$. Then $f$ is proper.
\end{lemma}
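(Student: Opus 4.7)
The plan is to show directly that the preimage under $f$ of any bounded set in $Y$ is bounded in $X$, using the closeness hypothesis to transfer boundedness from $g(B)$ back to $f^{-1}(B)$.

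First I would let $B\subseteq Y$ be bounded, so there is some $N>0$ with $\dist(y,y')\leq N$ for all $y,y'\in B$. Applying the bornologous condition on $g$ to this $N$ yields an $M>0$ such that $\dist(g(y),g(y'))\leq M$ for all $y,y'\in B$; in other words, $g(B)$ is bounded in $X$. This is the step that converts bounded behavior on the $Y$ side into bounded behavior on the $X$ side.

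Next, closeness of $g\circ f$ to $\id_X$ gives a constant $D$ with $\dist(x,g(f(x)))\leq D$ for every $x\in X$. For any two points $x,x'\in f^{-1}(B)$, I would apply the triangle inequality along the chain $x\to g(f(x))\to g(f(x'))\to x'$, noting that $f(x),f(x')\in B$ so that the middle segment is controlled by $M$ while the two outer segments are each controlled by $D$. This gives $\dist(x,x')\leq 2D+M$, so $f^{-1}(B)$ is bounded.

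There is no real obstacle here: the argument is a routine triangle-inequality estimate, and the only subtlety is remembering that $f$ itself is not assumed bornologous, so boundedness has to be pulled back through $g$ rather than pushed forward through $f$. No properness or bornologous assumption on $f$ is needed for this direction.
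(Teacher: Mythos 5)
Your proof is correct and follows essentially the same argument as the paper: bound the diameter of the preimage by applying the bornologous condition on $g$ to points of the bounded set and then using the triangle inequality through $g\circ f$ with the closeness constant, arriving at the same bound $M+2D$ (the paper's $M+2R$).
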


\begin{proof}
Suppose $A\subset Y$ is bounded, say $\dist(x,y)\leq N$ for all $x,y\in A$. Suppose $x,y\in f^{-1}(A)$. Then $f(x),f(y)\in A$ so $\dist(f(x),f(y))\leq N$. Since $g$ is bornologous there is an $M>0$ so that $\dist(g\circ f(x),g\circ f(y))\leq M$. Now since $g\circ f$ is close to the identity there is an $R>0$ so that $\dist(g\circ f(x),x),\dist(g\circ f(y),y)\leq R$. Thus $\dist(x,y)\leq \dist(x,g\circ f(x))+\dist(g\circ f(x),g\circ f(y))+\dist(g\circ f(y),y)\leq M+2R$.
\end{proof}

\begin{example}
Let $V=\{(-1,y):y\geq 1\}\cup \{(x,1):-1\leq x\leq 1\}\cup \{(1,y):y\geq 1\}\subset \mathbb R^2$. Following \cite{MMS} we will use the taxicab metric which is bornologously (and therefore coarsely) equivalent to the standard metric. We show that $V$ is coarsely equivalent to the ray $[1,\infty)$ and therefore $\sigma(V)=\sigma[1,\infty)$.

First let us note that $\sigma[1,\infty)$ is fairly easy to calculate. Suppose $N\geq 1$. By \cite[Lemma 2.4]{MMS} an $N$-sequence $s=\{s_i\}$ in $[1,\infty)$ goes to infinity if and only if $\lim_{i\to\infty}s=\infty$. In particular $[(i)]\in\sigma_N[1,\infty)$. Also, given $[s]\in\sigma_N[1,\infty)$, since $\lim_{n\to\infty}s=\infty$ we see that $s$ is equivalent to an increasing sequence $t$ and if we put $t$ and $(i)$ together using the order on $\mathbb R$ we obtain the desired equivalence between $s$ and $(i)$. We have shown that $\sigma_N[1,\infty)=\{[(i)]\}$ so $\sigma[1,\infty)=1$.

Now we define a coarse equivalence between $V$ and $[1,\infty)$. Define $f:V\to [1,\infty)$ to send a point $(x,y)\in V$ to $y\in[0,1)$. Let $g:[1,\infty)\to V$ send $y\in [1,\infty)$ to $(1,y)\in V$. We have that $f\circ g(y)=y$ for all $y\in [1,\infty)$ so $f\circ g$ is the identity on the nose. Given $(x,y)\in V$, $\dist(g\circ f(x,y),(x,y))=|x-1|+|y-y|\leq 2$ so $g\circ f$ is close to the identity.

By the lemma we need only to check that $f$ and $g$ are bornologous. Suppose $N>0$, $(x_1,y_1),(x_2,y_2)\in V$, and $\dist((x_1,y_1),(x_2,y_2))\leq N$. Then $\dist(f(x_1,y_1),f(x_2,y_2))=|y_1-y_2|=\dist((x_1,y_1),(x_2,y_2))-|x_1-x_2|\leq N-|x_1-x_2|\leq N$. Now suppose $y_1,y_2\in [1,\infty)$ and $|y_1-y_2|\leq N$. Then $\dist(g(y_1),g(y_2))=\dist((1,y_1),(1,y_2))=|1-1|+|y_1-y_2|\leq N$.
\end{example}

\begin{proposition}
Let $V$ be the vase from the previous example. Then $V$ is not coarsely equivalent to $\mathbb R$.
\end{proposition}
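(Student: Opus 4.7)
The plan is to invoke the main invariance theorem from Section 4 together with the two $\sigma$-values that have already been computed in the paper, so that the argument reduces to a single numerical comparison.

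First I would record that $\sigma(\mathbb{R}) = 2$, which is stated in Example \ref{integers} (citing Corollary 3.7 of \cite{MMS}). Then I would point out that the immediately preceding example established that $V$ is coarsely equivalent to $[1,\infty)$ and computed $\sigma([1,\infty)) = 1$; by the main theorem of Section 4, coarse equivalence preserves the invariant, so $\sigma(V) = 1$ as well. Thus $\sigma(V) \neq \sigma(\mathbb{R})$.

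To conclude, I would argue by contradiction: if $V$ were coarsely equivalent to $\mathbb{R}$, then (after fixing compatible basepoints, using the basepoint-independence proposition from Section 3 to suppress the choice) the main theorem would force $\sigma(V) = \sigma(\mathbb{R})$, contradicting $1 \neq 2$.

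There is essentially no obstacle here; the only mild subtlety is that the main theorem requires both spaces to be $\sigma$-stable, so I would make sure to note explicitly that $\mathbb{R}$ and $V$ are each $\sigma$-stable. This is already implicit in the earlier computations ($\sigma([1,\infty)) = 1$ is obtained by exhibiting a single equivalence class that works for every $N \geq 1$, and $\sigma$-stability of $\mathbb{R}$ is part of what is used in Example \ref{integers} via \cite{MMS}), and $\sigma$-stability of $V$ transfers from $[1,\infty)$ by the same invariance theorem applied to the coarse equivalence built in the previous example.
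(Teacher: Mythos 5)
Your proposal is correct and follows essentially the same route as the paper: the paper's proof simply cites $\sigma(V)=1$ from the previous example and $\sigma(\mathbb R)=2$ from \cite{MMS}, with the contradiction via the main invariance theorem left implicit. Your added remarks on $\sigma$-stability just make explicit a hypothesis the paper takes for granted.
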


\begin{proof}
According to the previous example $\sigma(V)=1$ and according to \cite[Corollary 3.7]{MMS} $\sigma(\mathbb R)=2$.
\end{proof}


\begin{thebibliography}{99}

\bibitem{MMS} B. Miller, J. Moore, and L. Stibich. \emph{An invariant of metric spaces under bornologous equivalences.} Mathematics Exchange. 7 (2010) 12--19.


\bibitem{Roe}
J. Roe. \emph{Lectures on coarse geometry, University lecture series, 31.} American Mathematical Society, Providence, RI, 2003.

\end{thebibliography}
\end{document}